\documentclass[conference]{IEEEtran}
\IEEEoverridecommandlockouts
\usepackage{cite}
\usepackage{amsmath,amssymb,amsthm}
\usepackage[utf8]{inputenc}
\usepackage{algorithm}
\usepackage{algpseudocode}
\usepackage{graphicx}
\usepackage{booktabs}
\usepackage{textcomp}
\usepackage{xcolor}
\usepackage{fancyhdr}
\usepackage{mathrsfs}
\usepackage{listings}
\usepackage{url}
\lstdefinestyle{bw_style}{
	backgroundcolor=\color{white},   
	commentstyle=\itshape\color{gray}, 
	keywordstyle=\bfseries\color{black}, 
	numberstyle=\tiny\color{gray},
	stringstyle=\color{black},
	basicstyle=\ttfamily\scriptsize,
	breakatwhitespace=false,         
	breaklines=true,                 
	captionpos=b,                    
	keepspaces=true,                 
	numbers=left,                    
	numbersep=5pt,                  
	showspaces=false,                
	showstringspaces=false,
	showtabs=false,                  
	tabsize=2,
	frame=single, 
	language=Python
}
\def\BibTeX{{\rm B\kern-.05em{\sc i\kern-.025em b}\kern-.08em
		T\kern-.1667em\lower.7ex\hbox{E}\kern-.125emX}}

\newtheorem{theorem}{Theorem}
\newtheorem{remark}{Remark}

\newtheorem{assumption}{Assumption}
\newtheorem{lemma}{Lemma}
\newtheorem{corollary}{Corollary}
\newtheorem{definition}{Definition}

\begin{document}

\title{Lyapunov Stability of Stochastic Vector Optimization: Theory and Numerical Implementation}

\author{\IEEEauthorblockN{Thiago Santos}
\IEEEauthorblockA{\textit{Department of Mathematics} \\
\textit{Federal University of Ouro Preto, UFOP}\\
Ouro Preto, Brazil \\
santostf@ufop.edu.br}
\and
\IEEEauthorblockN{Sebastião Xavier}
\IEEEauthorblockA{\textit{Department of Mathematics} \\
	\textit{Federal University of Ouro Preto, UFOP}\\
	Ouro Preto, Brazil \\
	semarx@ufop.edu.br}}

\maketitle
\pagestyle{fancy}
\fancyhf{}
\fancyfoot[LE, LO]{}

\begin{abstract}
	The use of stochastic differential equations in multi-objective optimization has been limited, in practice, by two persistent gaps: incomplete stability analyses and the absence of accessible implementations. We revisit a drift--diffusion model for unconstrained vector optimization in which the drift is induced by a common descent direction and the diffusion term preserves exploratory behavior. The main theoretical contribution is a self-contained Lyapunov analysis establishing global existence, pathwise uniqueness, and non-explosion under a dissipativity condition, together with positive recurrence under an additional coercivity assumption. We also derive an Euler--Maruyama discretization and implement the resulting iteration as a \emph{pymoo}-compatible algorithm---\emph{pymoo} being an open-source Python framework for multi-objective optimization---with an interactive \emph{PymooLab} front-end for reproducible experiments. Empirical results on DTLZ2 with objective counts from three to fifteen indicate a consistent trade-off: compared with established evolutionary baselines, the method is less competitive in low-dimensional regimes but remains a viable option under restricted evaluation budgets in higher-dimensional settings. Taken together, these observations suggest that stochastic drift--diffusion search occupies a mathematically tractable niche alongside population-based heuristics---not as a replacement, but as an alternative whose favorable properties are amenable to rigorous analysis.
\end{abstract}

\begin{IEEEkeywords}
Stochastic vector optimization, Multi-objective optimization, Lyapunov stability, Stochastic differential equations, Open-source optimization software
\end{IEEEkeywords}

\section{Introduction}

Let $F=(f_1,\dots,f_m):\mathbb{R}^n\to\mathbb{R}^m$ denote a vector objective with differentiable components, and order $\mathbb{R}^m$ by the usual Pareto dominance relation.  In this setting, the goal is to characterize (or approximate) the set of Pareto optimal points, whose geometry is typically nonconvex and may be disconnected even when each $f_i$ is smooth.  A careful geometric account of the resulting decision structure can already be found in the classical literature, e.g., in the monograph by Miettinen \cite{Miettinen1999}, and it remains a useful point of departure for stability-oriented algorithm design.

Most large-scale practice nevertheless relies on population heuristics because they cope with black-box objectives and provide a finite approximation of the Pareto set in a single run.  Well-known instances include NSGA-II, introduced by Deb et al.\ \cite{Deb2002}, and decomposition-based schemes such as MOEA/D proposed by Zhang and Li \cite{Zhang2007}; their empirical success is not in doubt, yet their behavior is often mediated by tuning choices that are difficult to interpret analytically.

A parallel line of inquiry treats multi-objective search as the continuous evolution of a dynamical system on the decision space---the strategy being to specify a drift encoding simultaneous descent across all objectives, then perturb the resulting flow to prevent premature convergence to a single point.  In this vein, Sch\"affler and co-authors proposed a drift--diffusion model in which Brownian forcing is coupled to a multi-objective descent field \cite{Schaffler2002}, yielding a continuous-time process intended to approach the Pareto set without collapsing prematurely.

The model is built around the stochastic differential equation
\begin{equation}\label{eq:sde_intro}
	dX_t = -q(X_t)\,dt + \epsilon\, dB_t,
\end{equation}
where $q:\mathbb{R}^n\to\mathbb{R}^n$ aggregates local objective information into a single direction and $B_t$ is a standard Brownian motion.  The drift promotes dominance improvement, whereas the diffusion introduces controlled randomization; the latter is attractive operationally, but it also raises a mathematical question that cannot be postponed: does \eqref{eq:sde_intro} generate a well-defined, nonexplosive Markov process with a meaningful long-run regime?

In the original development, several foundational claims were asserted without a fully self-contained proof, with key arguments scattered across earlier sources such as Sch\"affler \cite{Schaffler1995} and standard SDE theory in Khasminskii \cite{Khasminskii2011}.  For the intended use of the method as a sampling mechanism, however, existence and uniqueness are only the first layer; one also needs recurrence properties that justify stationary sampling rather than reliance on transient time segments.

A further impetus comes from covariance-adaptive variants.  Santos et al introduced CMA-SSW \cite{Santos2012}, combining the drift--diffusion construction with covariance matrix adaptation so as to improve spread in higher dimensions.  The numerical behavior is compelling, yet the stability narrative for the covariance-adapted dynamics, as well as an implementation path that is straightforward to reproduce, have remained insufficiently explicit in the literature.

This paper supplies that missing structure.  We make the probabilistic setting precise (Wiener space, filtration, and measurability conventions) and then establish global existence and pathwise uniqueness for the drift--diffusion system under dissipativity assumptions that are natural for descent fields.  We then prove positive recurrence via Foster--Lyapunov methodology, drawing on the general Markov-process framework developed by Meyn and Tweedie \cite{Meyn2009} and on stochastic Lyapunov techniques as presented, for example, in Mao \cite{Mao2007}; importantly, the conditions are stated so that they can be checked for concrete objective families.

We discretize the dynamics using the Euler--Maruyama scheme of Kloeden and Platen \cite{Kloeden1992} and integrate the resulting optimizer into the \emph{pymoo} library, following the software architecture documented by Blank and Deb \cite{Blank2020}.  To keep the numerical layer auditable, we rely on standard scientific-computing components, notably SciPy as curated by Virtanen et al.\ \cite{Virtanen2020}, and we provide an interactive front-end through \emph{PymooLab} \cite{Santosetal2026} to facilitate sensitivity inspection with respect to step size and noise intensity.

The scope is deliberately bounded.  We focus on unconstrained problems with continuous decision variables and differentiable objectives; extending the analysis to nonsmooth or constrained settings may require different stochastic perturbations and additional variational tools.

We proceed as follows.  Section~2 sets up the multi-objective framework and derives the descent direction $q$; Section~3 establishes Lyapunov-based existence, uniqueness, and recurrence results for the SDE~(\ref{eq:sde}); Section~4 presents the Euler--Maruyama discretization and its embedding within \emph{pymoo}; Section~5 closes with reflections on limitations and open questions.

\section{Theoretical Framework}

We consider the unconstrained vector optimization problem (VOP) of minimizing a twice continuously differentiable vector-valued function $f: \mathbb{R}^n \to \mathbb{R}^m$, where $f = (f_1, \ldots, f_m)^\top$. Since the components of $f$ typically conflict with one another, there is generally no single point that minimizes all objectives simultaneously. The standard approach in multi-objective optimization is to seek the set of points from which no further improvement in any objective is possible without degrading at least one other objective.

Formally, we say that a point $\hat{x} \in \mathbb{R}^n$ \emph{Pareto-dominates} another point $x \in \mathbb{R}^n$, written $\hat{x} \prec x$, if $f_i(\hat{x}) \le f_i(x)$ for all $i = 1, \ldots, m$ and $f(\hat{x}) \ne f(x)$. A point $x^* \in \mathbb{R}^n$ is called \emph{Pareto-optimal} if there exists no $w \in \mathbb{R}^n$ such that $w \prec x^*$. The collection of all Pareto-optimal points constitutes the \emph{Pareto set}, whose image under $f$ is the \emph{Pareto front}.

To construct a trajectory that moves continuously toward the Pareto set, we require a direction along which all objective functions decrease simultaneously. Following \cite{Schaffler2002}, we define the vector field $q: \mathbb{R}^n \to \mathbb{R}^n$ as
\begin{equation}\label{eq:q_definition}
	q(x) := \sum_{i=1}^m \hat{\alpha}_i(x) \nabla f_i(x),
\end{equation}
where the coefficients $\hat{\alpha}(x) = (\hat{\alpha}_1(x), \ldots, \hat{\alpha}_m(x))$ are obtained as the solution to the quadratic optimization problem
\begin{equation}\label{eq:qop}
	\min_{\alpha \in \Delta^m} \left\| \sum_{i=1}^m \alpha_i \nabla f_i(x) \right\|_2^2, \quad \Delta^m := \left\{ \alpha \in \mathbb{R}^m : \alpha_i \ge 0, \sum_{i=1}^m \alpha_i = 1 \right\}.
\end{equation}
The problem (\ref{eq:qop}) is strictly convex on the probability simplex $\Delta^m$, ensuring existence and uniqueness of the minimizer $\hat{\alpha}(x)$ for each $x \in \mathbb{R}^n$.

Following \cite{Schaffler2002}, we note three properties of $q$ that are central to the analysis.  The direction $-q(x)$ is a common descent for all objectives whenever $q(x) \ne 0$---that is, $\nabla f_i(x) \cdot (-q(x)) \le 0$ for each $i$---and $q(x) = 0$ precisely when $x$ satisfies the KKT conditions for Pareto optimality.  We also observe that $q$ is locally Lipschitz on $\mathbb{R}^n$, a regularity that follows from the structure of the KKT system associated with (\ref{eq:qop}) and the smoothness of $f$. These properties imply that the deterministic flow
\begin{equation}\label{eq:ivp}
	\dot{x}(t) = -q(x(t)), \quad x(0) = x_0,
\end{equation}
generates a curve of successively dominated points: if $q(x_0) \ne 0$, then $f(x(s)) \prec f(x(t))$ for all $0 \le s < t$ within the maximal interval of existence.

That said, this deterministic flow converges to a single Pareto-optimal point; it cannot, by construction, spread across the full front.  We address this by adding It{\^o} noise, replacing~(\ref{eq:ivp}) with the stochastic differential equation
\begin{equation}\label{eq:sde}
	dX_t = -q(X_t)\,dt + \epsilon\,dB_t, \quad X_0 = x_0,
\end{equation}
where $\epsilon > 0$ is the noise intensity and $\{B_t\}_{t \ge 0}$ denotes an $n$-dimensional Brownian motion. As one might expect, the drift $-q(X_t)$ guides trajectories toward the Pareto set; the diffusion $\epsilon\,dB_t$ injects enough randomness to prevent premature concentration on a single Pareto-optimal solution.

We now establish the measure-theoretic setting for the stochastic analysis. Let $\Omega$ denote the space of all continuous functions $\omega: [0, \infty) \to \mathbb{R}^n$, equipped with the metric
\begin{equation}\label{eq:metric}
	d(\omega_1, \omega_2) := \sum_{k=1}^{\infty} \frac{1}{2^k} \min\left\{ \sup_{0 \le t \le k} \|\omega_1(t) - \omega_2(t)\|_2, 1 \right\},
\end{equation}
which metrizes uniform convergence on compact time intervals. The resulting topology makes $\Omega$ a Polish space; we denote by $\mathscr{B}(\Omega)$ its Borel $\sigma$-algebra.

\begin{definition}[Wiener Measure]\label{def:wiener}
	For each $x \in \mathbb{R}^n$, the \emph{Wiener measure} $\mathbb{W}_x$ is the unique probability measure on $(\Omega, \mathscr{B}(\Omega))$ such that: (i) $\mathbb{W}_x(\omega(0) = x) = 1$; (ii) for any $0 \le t_0 < t_1 < \cdots < t_k$, the increments $\omega(t_1) - \omega(t_0), \ldots, \omega(t_k) - \omega(t_{k-1})$ are independent under $\mathbb{W}_x$; and (iii) for $0 \le s < t$, the increment $\omega(t) - \omega(s)$ follows the $\mathcal{N}(0, (t-s)I_n)$ distribution.
\end{definition}

The coordinate process $B_t(\omega) := \omega(t) - \omega(0)$ on $(\Omega, \mathscr{B}(\Omega), \mathbb{W}_x)$ is an $n$-dimensional Brownian motion starting from the origin. We denote by $\{\mathscr{F}_t\}_{t \ge 0}$ the natural filtration of $\{B_t\}$, augmented with the $\mathbb{W}_x$-null sets to satisfy the usual completeness conditions. A comprehensive treatment of stochastic calculus and Brownian motion is developed in \cite{Karatzas1991,Oksendal2003}, and we follow their notation whenever it does not conflict with the journal style.

To analyze the long-time behavior of the process (\ref{eq:sde}), we impose structural conditions on the vector field $q$. The first condition was introduced by Sch\"affler et al.\ \cite{Schaffler2002} specifically for the stochastic vector optimization setting; the second is a coercivity requirement whose general form is standard in the theory of recurrent diffusions \cite{Khasminskii2011,Mao2007}, which we adapt here to the multi-objective descent field $q$.

\begin{assumption}[Dissipativity \textnormal{\cite{Schaffler2002}}]\label{ass:A}
	There exist constants $\theta_0 > 0$ and $r \ge 1$ such that for all $x \in \mathbb{R}^n$ with $\|x\|_2 \ge r$:
	\begin{equation}\label{eq:assA}
		x \cdot q(x) \ge \left(1 + \frac{\theta_0^2}{2}\right) \max\{1, \|q(x)\|_2^2\}.
	\end{equation}
\end{assumption}

This condition appears as ``Assumption~A'' in \cite{Schaffler2002}, where it guarantees that outside a sufficiently large ball centered at the origin, the drift $-q(x)$ has a strong radial component directed inward. This counteracts the expansive nature of Brownian motion and prevents trajectories from escaping to infinity. In the original development, Sch\"affler et al.\ used it to establish the existence and uniqueness of solutions to the SDE (Theorem~3.2 in \cite{Schaffler2002}) as well as the hitting-time finiteness (Theorem~3.3 therein), invoking earlier results from \cite{Schaffler1995}.

The original analysis in \cite{Schaffler2002} does not include an explicit coercivity condition, and this omission leaves the positive recurrence of the process without a complete justification. We introduce the following assumption to close that gap. Although conditions requiring linear drift growth at infinity are classical in the Lyapunov stability theory for diffusion processes---see, for instance, Khasminskii \cite[Chapter~4]{Khasminskii2011} and Mao \cite[Chapter~5]{Mao2007}---their adaptation to the multi-objective descent field $q$ defined in (\ref{eq:q_definition}) is, to the best of our knowledge, new.

\begin{assumption}[Coercivity]\label{ass:B}
	There exists $\mu > 0$ such that for all $x \in \mathbb{R}^n$ with $\|x\|_2 \ge r$:
	\begin{equation}\label{eq:assB}
		\|q(x)\|_2 \ge \mu \|x\|_2.
	\end{equation}
\end{assumption}

Assumption \ref{ass:B} guarantees that the magnitude of the descent direction grows at least linearly with distance from the origin. This condition holds when each objective function $f_i$ is strongly convex, or when the Pareto set is bounded and the objective gradients remain bounded away from zero far from this set. As we shall see, Assumption \ref{ass:A} alone suffices for global existence of solutions, whereas Assumption \ref{ass:B} is additionally required for the positive recurrence property.

\begin{remark}
	Assumption \ref{ass:A} was formulated by Sch\"affler et al.\ \cite{Schaffler2002} for the specific multi-objective descent field $q$ defined in (\ref{eq:q_definition}). Although its one-sided growth structure is reminiscent of classical hypotheses in stochastic stability theory \cite{Khasminskii2011,Mao2007}, the precise form involving $\max\{1, \|q(x)\|_2^2\}$ is tailored to the vector optimization setting. We deliberately introduce Assumption \ref{ass:B} as an original contribution of this work; while similar linear growth conditions appear in the study of recurrent diffusions \cite[Theorem 3.5]{Khasminskii2011}, their adaptation to the multi-objective context ensures that the drift dominates diffusion at infinity, providing the missing piece needed for a rigorous proof of positive recurrence.
\end{remark}

\section{Rigorous Stability Analysis}

The results of the preceding section equip the SDE~(\ref{eq:sde}) with a well-defined descent direction and a precise measure-theoretic setting.  What remains to be verified is whether the resulting Markov process enjoys the properties that justify its use as an optimization algorithm: global existence of trajectories, their non-explosion, and---ultimately---positive recurrence toward the Pareto set.  We address these questions in order, building each result upon its predecessor through a Lyapunov function approach.

The first preparatory step establishes that the drift field $q$ grows at most linearly.  This bound, which follows directly from the dissipative structure encoded in Assumption~\ref{ass:A}, is the key ingredient for applying Khasminskii's non-explosion criterion in the global existence theorem that follows.

\begin{lemma}[Linear Growth Bound]\label{lemma:growth}
	Suppose $q$ is locally Lipschitz and satisfies Assumption \ref{ass:A}. Then there exists $L > 0$ such that for all $x \in \mathbb{R}^n$:
	\begin{equation}\label{eq:linear_growth}
		\|q(x)\|_2 \le L(1 + \|x\|_2).
	\end{equation}
	Moreover, for $V(x) = \|x\|_2^2$, the infinitesimal generator $\mathscr{L}$ of the diffusion satisfies
	\begin{equation}\label{eq:generator_bound}
		\mathscr{L}V(x) = -2x \cdot q(x) + \epsilon^2 n \le C_1 V(x) + C_2
	\end{equation}
	for constants $C_1, C_2 > 0$.
\end{lemma}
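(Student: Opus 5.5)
The plan is to split $\mathbb{R}^n$ into the closed ball $\bar B_r:=\{\|x\|_2\le r\}$ and its complement, with $r$ the radius from Assumption~\ref{ass:A}. On the complement the dissipativity inequality~(\ref{eq:assA}) should directly force $\|q\|_2$ to be at most linear. On the ball, continuity of $q$ (implied by local Lipschitz continuity) together with compactness gives a uniform bound. Write $c:=1+\theta_0^2/2>1$.

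\emph{Exterior region.} Fix $\|x\|_2\ge r$. Since $\max\{1,\|q(x)\|_2^2\}\ge\|q(x)\|_2^2$, Assumption~\ref{ass:A} combined with Cauchy--Schwarz gives
\begin{equation*}
c\,\|q(x)\|_2^2\;\le\; x\cdot q(x)\;\le\;\|x\|_2\,\|q(x)\|_2 .
\end{equation*}
If $q(x)\neq0$ we divide by $\|q(x)\|_2$ and obtain $\|q(x)\|_2\le \|x\|_2/c\le\|x\|_2$; if $q(x)=0$ the same bound holds trivially. Thus outside the ball, $q$ is in fact sublinear with slope at most $1/c<1$.

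\emph{Interior region.} $q$ is continuous on the compact set $\bar B_r$, so $M:=\max_{\|x\|_2\le r}\|q(x)\|_2<\infty$. Setting $L:=\max\{M,1\}$ then yields $\|q(x)\|_2\le L(1+\|x\|_2)$ on all of $\mathbb{R}^n$, which is (\ref{eq:linear_growth}).

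\emph{Generator bound.} For $V(x)=\|x\|_2^2$ we have $\nabla V=2x$ and $\nabla^2V=2I_n$. The generator of (\ref{eq:sde}) is $\mathscr{L}=-q\cdot\nabla+\tfrac{\epsilon^2}{2}\Delta$, so $\mathscr{L}V(x)=-2x\cdot q(x)+\epsilon^2 n$, which is the identity in (\ref{eq:generator_bound}). For the inequality we bound $-2x\cdot q(x)$ region by region.
\begin{itemize}
\item If $\|x\|_2\ge r$, Assumption~\ref{ass:A} gives $x\cdot q(x)\ge c>0$, hence $\mathscr{L}V(x)\le \epsilon^2 n-2c<\epsilon^2 n$.
\item If $\|x\|_2<r$, Cauchy--Schwarz gives $-2x\cdot q(x)\le 2rM$.
\end{itemize}
Therefore $\mathscr{L}V\le C_2:=2rM+\epsilon^2 n$ everywhere. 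Since $V\ge0$, this implies $\mathscr{L}V\le C_1V+C_2$ for any $C_1>0$, e.g.\ $C_1=1$. The statement only asks for some positive constants, so no optimization of $C_1$ is needed.

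I do not expect a genuine obstacle. The only delicate point is the division by $\|q(x)\|_2$ in the exterior estimate, which the case $q(x)=0$ handles. It is also worth noting that Assumption~\ref{ass:A} gives more than the lemma states: $\mathscr{L}V$ is bounded above by a constant, not merely by an affine function of $V$. This stronger form may be useful later for the recurrence argument.
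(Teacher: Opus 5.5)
Your proof is correct. The linear-growth part follows the paper's outline: a compactness bound on the ball, then Assumption~\ref{ass:A} plus Cauchy--Schwarz outside it. You streamline it, though. The paper splits the exterior into the cases $\|q(x)\|_2\ge 1$ and $\|q(x)\|_2<1$ and ends with $L=\max\{M,\kappa,1/r\}$. Your observation $\max\{1,\|q(x)\|_2^2\}\ge\|q(x)\|_2^2$ makes that split unnecessary and gives $L=\max\{M,1\}$.

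The generator bound is where the two routes genuinely differ. The paper derives it from the growth estimate alone, via $-2x\cdot q(x)\le 2L\|x\|_2(1+\|x\|_2)\le 3L\|x\|_2^2+L$, so $C_1=3L$ and $C_2=L+\epsilon^2 n$. That argument discards the sign information in Assumption~\ref{ass:A} and would hold for any drift of linear growth; it is the generic Khasminskii setup. You instead use the sign directly, namely $x\cdot q(x)\ge 1+\theta_0^2/2$ outside the ball. This gives the stronger bound $\mathscr{L}V\le 2rM+\epsilon^2 n$ with no dependence on $V$, and it makes the second claim independent of the first.

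The payoff appears in Theorem~\ref{thm:existence}. Dynkin's formula then gives $\mathbb{E}[V(X_{t\wedge\tau_k})]\le V(x_0)+C_2 t$ directly, with no Gronwall step and linear rather than exponential growth in $t$.

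On your closing remark: outside the ball your bound is $\epsilon^2 n-2-\theta_0^2$. This is negative only when $\epsilon^2 n<2+\theta_0^2$, so under the paper's hypotheses it is not by itself a Foster--Lyapunov drift condition.
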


\begin{proof}
Since $q$ is locally Lipschitz, it is continuous and bounded on the compact ball $\mathscr{B}_r = \{x : \|x\|_2 \le r\}$. Let $M = \sup_{x \in \mathscr{B}_r} \|q(x)\|_2$. For $\|x\|_2 \ge r$, we distinguish two cases. If $\|q(x)\|_2 \ge 1$, Assumption \ref{ass:A} gives
\begin{equation}\label{eq:case1}
	x \cdot q(x) \ge \left(1 + \frac{\theta_0^2}{2}\right) \|q(x)\|_2^2.
\end{equation}
By the Cauchy-Schwarz inequality, $x \cdot q(x) \le \|x\|_2 \|q(x)\|_2$. Combining these inequalities yields
\begin{equation}\label{eq:kappa_bound}
	\|q(x)\|_2 \le \kappa \|x\|_2, \quad \text{where } \kappa := \left(1 + \frac{\theta_0^2}{2}\right)^{-1} < 1.
\end{equation}
If $\|q(x)\|_2 < 1$, then trivially $\|q(x)\|_2 < 1 \le \|x\|_2/r$ for $\|x\|_2 \ge r \ge 1$ (by Assumption \ref{ass:A}). Combining the bounds on $\mathscr{B}_r$ and its complement, we obtain (\ref{eq:linear_growth}) with $L = \max\{M, \kappa, 1/r\}$.

For the generator bound, note that
\begin{equation}\label{eq:gen_intermediate}
	\mathscr{L}V(x) = -2x \cdot q(x) + \epsilon^2 n \le 2\|x\|_2 \|q(x)\|_2 + \epsilon^2 n.
\end{equation}
Using the linear growth of $q$ and Young's inequality $2ab \le a^2 + b^2$:
\begin{equation}\label{eq:young}
	2\|x\|_2 \|q(x)\|_2 \le 2L\|x\|_2(1 + \|x\|_2) \le 3L\|x\|_2^2 + L.
\end{equation}
Thus $\mathscr{L}V(x) \le 3L V(x) + (L + \epsilon^2 n)$, establishing the claim with $C_1 = 3L$ and $C_2 = L + \epsilon^2 n$.
\end{proof}

The linear growth bound is not merely a technical convenience; it places the SDE~(\ref{eq:sde}) within the standard It{\^o} framework in which moment estimates and Gronwall-type arguments apply uniformly across stopping-time horizons.  With this estimate at hand, we can now establish that trajectories never explode in finite time---a property that is indispensable for any practical implementation, since explosion would invalidate the numerical scheme.

\begin{theorem}[Global Existence and Uniqueness]\label{thm:existence}
	Under Assumption \ref{ass:A}, for any initial condition $x_0 \in \mathbb{R}^n$, there exists a unique strong solution $\{X_t\}_{t \ge 0}$ to the SDE (\ref{eq:sde}) with $\mathbb{P}(\tau_\infty = \infty) = 1$, where $\tau_\infty$ denotes the explosion time. Moreover, $X_t$ has continuous sample paths almost surely.
\end{theorem}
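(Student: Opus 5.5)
The plan is the standard localization argument combined with Khasminskii's non-explosion criterion, with $V(x)=\|x\|_2^2$ as the Lyapunov function from Lemma~\ref{lemma:growth}. First I would obtain local existence and pathwise uniqueness. Since $q$ is locally Lipschitz and the diffusion coefficient $\epsilon I_n$ is constant, for each $k\in\mathbb{N}$ I would choose a globally Lipschitz truncation $q_k$ that agrees with $q$ on the ball $\{\|x\|_2\le k\}$. One option is $q_k(x)=q(\pi_k(x))$, where $\pi_k$ is the radial projection onto the ball. The classical Itô existence and uniqueness theorem on the filtered space $(\Omega,\mathscr{B}(\Omega),\{\mathscr{F}_t\},\mathbb{W}_{x_0})$ then gives a unique strong solution $X^{(k)}$ with continuous paths.

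Next, set $\tau_k=\inf\{t:\|X^{(k)}_t\|_2\ge k\}$. A Gronwall argument on $\|X^{(k)}_{t\wedge\tau_k}-X^{(k+1)}_{t\wedge\tau_k}\|_2$ shows that $X^{(k)}=X^{(k+1)}$ on $[0,\tau_k]$ almost surely. So the processes patch together into a unique continuous solution $X_t$ on $[0,\tau_\infty)$, where $\tau_\infty=\lim_k\tau_k$. Pathwise uniqueness of any two solutions up to $\tau_\infty$ follows by the same localized Gronwall estimate, using the Lipschitz constant of $q$ on each ball.

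The remaining, and main, step is to show $\mathbb{P}(\tau_\infty=\infty)=1$. Apply Itô's formula to $V(X_{t\wedge\tau_k})$. The stochastic integral $\int 2\epsilon X_s\cdot dB_s$ stopped at $\tau_k$ is a true martingale, because its integrand is bounded. Using the generator bound (\ref{eq:generator_bound}) from Lemma~\ref{lemma:growth}, this gives
\[
\mathbb{E}\,V(X_{t\wedge\tau_k})\le \|x_0\|_2^2+\int_0^t \bigl(C_1\mathbb{E}\,V(X_{s\wedge\tau_k})+C_2\bigr)\,ds ,
\]
and Gronwall yields $\mathbb{E}\,V(X_{t\wedge\tau_k})\le(\|x_0\|_2^2+C_2t)e^{C_1t}$, uniformly in $k$. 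Since $V(X_{\tau_k})=k^2$ on $\{\tau_k\le t\}$, we get $k^2\,\mathbb{P}(\tau_k\le t)\le(\|x_0\|_2^2+C_2t)e^{C_1t}$. Letting $k\to\infty$ gives $\mathbb{P}(\tau_\infty\le t)=0$ for every $t$, hence $\tau_\infty=\infty$ almost surely.

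A sharper alternative is available directly from Assumption~\ref{ass:A}. For $\|x\|_2\ge r$, the assumption gives $\mathscr{L}V\le -2(1+\theta_0^2/2)+\epsilon^2 n$, which is bounded above, so one could even take $C_1=0$. I would note this but rely on the lemma as stated. Continuity of sample paths is inherited from the construction, since each $X^{(k)}$ is continuous and they agree up to $\tau_k$.

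I expect the only delicate point to be bookkeeping rather than estimation. One must ensure that strong solutions are adapted to the augmented Brownian filtration defined in the paper, and that the stopped stochastic integrals are genuine martingales. The Gronwall bound itself is routine once Lemma~\ref{lemma:growth} is in hand.
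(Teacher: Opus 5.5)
Your proposal is correct and follows the paper's proof: you apply It\^o's formula to $V(x)=\|x\|_2^2$ stopped at $\tau_k$, use the generator bound of Lemma~\ref{lemma:growth} with Gronwall to get a $k$-independent bound on $\mathbb{E}\,V(X_{t\wedge\tau_k})$, and then conclude with Chebyshev's inequality and $k\to\infty$. Two of your additions only fill in details the paper leaves to standard SDE theory: the truncation-and-patching construction of the local solution, and the observation that Assumption~\ref{ass:A} alone bounds $\mathscr{L}V$ from above, so $C_1=0$ would do.
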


\begin{proof}
Local existence and uniqueness follow from the local Lipschitz continuity of $q$ and standard SDE theory \cite{Oksendal2003}. Let $\tau_\infty$ denote the explosion time and define the stopping times $\tau_k := \inf\{t \ge 0 : \|X_t\|_2 \ge k\}$ for $k \in \mathbb{N}$. Applying It{\^o}'s formula to $V(X_t) = \|X_t\|_2^2$:
\begin{equation}\label{eq:ito_v}
	V(X_{t \wedge \tau_k}) = V(x_0) + \int_0^{t \wedge \tau_k} \mathscr{L}V(X_s)\, ds + 2\epsilon \int_0^{t \wedge \tau_k} X_s \cdot dB_s.
\end{equation}
Taking expectations and noting that the stochastic integral is a martingale (the integrand is bounded on $[0, t \wedge \tau_k]$):
\begin{equation}\label{eq:expectation_v}
	\mathbb{E}[V(X_{t \wedge \tau_k})] = V(x_0) + \mathbb{E}\left[\int_0^{t \wedge \tau_k} \mathscr{L}V(X_s)\, ds\right].
\end{equation}
Using Lemma \ref{lemma:growth} and Gronwall's inequality:
\begin{equation}\label{eq:gronwall}
	\mathbb{E}[V(X_{t \wedge \tau_k})] \le (V(x_0) + C_2 t)e^{C_1 t} =: K(t).
\end{equation}
Note that $K(t)$ is independent of $k$. On $\{\tau_k \le t\}$, we have $V(X_{\tau_k}) = k^2$, so by Chebyshev's inequality:
\begin{equation}\label{eq:chebyshev}
	\mathbb{P}(\tau_k \le t) \le \frac{\mathbb{E}[V(X_{t \wedge \tau_k})]}{k^2} \le \frac{K(t)}{k^2}.
\end{equation}
Letting $k \to \infty$ yields $\mathbb{P}(\tau_\infty \le t) = 0$ for all $t > 0$, hence $\tau_\infty = \infty$ almost surely. Continuity of sample paths follows from the continuity of the Brownian paths and the boundedness of the drift on compact intervals \cite[Theorem 5.2.1]{Mao2007}.
\end{proof}

Global existence guarantees that trajectories persist indefinitely, yet persistence alone says nothing about where the process spends its time.  For the stochastic optimizer to be effective, it must return to a neighborhood of the Pareto set with probability one---and, crucially, it must do so in finite expected time, so that computational runs of bounded length capture meaningful information.  This recurrence property demands more than dissipativity: the drift must not merely prevent explosion but must actively pull the process back toward the region of interest.  Assumption~\ref{ass:B} provides exactly this additional structure, requiring the descent field to grow at least linearly far from the origin.

\begin{theorem}[Positive Recurrence]\label{thm:recurrence}
	Assume that the set of Pareto-optimal points is non-empty. Let $\bar{x} \in \mathbb{R}^n$ be a Pareto-optimal point (so that $q(\bar{x}) = 0$). Under Assumptions \ref{ass:A} and \ref{ass:B}, for any $p > 0$, define $S_p := \{x \in \mathbb{R}^n : \|x - \bar{x}\|_2 \le p\}$ and the first hitting time
	\begin{equation}\label{eq:hitting_time}
		\sigma_{S_p} := \inf\{t \ge 0 : X_t \in S_p\}.
	\end{equation}
	Then $\mathbb{W}_{x_0}(\sigma_{S_p} < \infty) = 1$ and $\mathbb{E}[\sigma_{S_p}] < \infty$ for all $x_0 \in \mathbb{R}^n$.
\end{theorem}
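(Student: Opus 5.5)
The plan is a Foster--Lyapunov argument built on the generator computation of Lemma~\ref{lemma:growth}, but applied to the shifted function $W(x) := \|x-\bar{x}\|_2^2$. Then the hitting time of $S_p$ is controlled by Dynkin's formula. Existence of the process for all time, with no explosion, is supplied by Theorem~\ref{thm:existence}, so it only remains to produce a uniform negative drift of $W$ outside a suitable compact set.

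\textbf{Drift estimate.} For $W$ the generator is
\[
\mathscr{L}W(x) = -2(x-\bar{x})\cdot q(x) + \epsilon^2 n = -2x\cdot q(x) + 2\bar{x}\cdot q(x) + \epsilon^2 n .
\]
For $\|x\|_2 \ge r$, Assumption~\ref{ass:A} gives $x\cdot q(x) \ge (1+\theta_0^2/2)\|q(x)\|_2^2$ whenever $\|q(x)\|_2\ge 1$. Assumption~\ref{ass:B} gives $\|q(x)\|_2 \ge \mu\|x\|_2$, which is at least $1$ once $\|x\|_2 \ge 1/\mu$. Hence, for $\|x\|_2 \ge R_0 := \max\{r,1/\mu\}$,
\[
\mathscr{L}W(x) \le -2\|q(x)\|_2^2 + 2\|\bar{x}\|_2\|q(x)\|_2 + \epsilon^2 n .
\]
The right-hand side is a downward quadratic in $\|q(x)\|_2 \ge \mu\|x\|_2$. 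Choosing $R \ge R_0$ large enough therefore gives $\mathscr{L}W(x) \le -1$ for all $\|x\|_2\ge R$.

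\textbf{Hitting the large ball.} Let $\sigma_R$ be the hitting time of $\{\|x\|_2\le R\}$. Dynkin's formula applied to $W(X_{t\wedge\sigma_R\wedge\tau_k})$ gives $\mathbb{E}[t\wedge\sigma_R\wedge\tau_k] \le W(x_0)$. Letting $k\to\infty$ (non-explosion, Theorem~\ref{thm:existence}) and then $t\to\infty$ by monotone convergence yields $\mathbb{E}[\sigma_R] \le W(x_0) < \infty$.

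\textbf{Main obstacle: passing from the large ball to the small ball $S_p$.} Inside $\{\|x\|_2\le R\}\setminus S_p$ the drift of $W$ need not be negative, since $q$ vanishes at every Pareto-critical point and not only at $\bar{x}$. I would handle this with the nondegenerate diffusion $\epsilon B$, following Khasminskii's argument for uniformly elliptic diffusions with bounded drift on compacts. There are two ingredients.

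\emph{Uniform success probability.} Since $|q|\le M_R$ on a slightly larger ball $\{\|x\|_2\le R+1\}$, a Girsanov comparison with Brownian motion (or a support-theorem argument) gives a $\delta>0$ such that, from every $x$ with $\|x\|_2\le R$, the process enters $S_p$ before time $1$ with probability at least $\delta$.

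\emph{Bounded excursions.} The excursions outside the large ball have expected length at most $\sup_{\|y\|_2 = R+1} W(y) =: c_R$, by the estimate above.

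Chaining these with the strong Markov property at successive returns to $\{\|x\|_2\le R\}$, the number of trials before success is geometric with parameter at least $\delta$. This yields
\[
\mathbb{E}[\sigma_{S_p}] \le W(x_0) + \delta^{-1}\,(1+c_R) < \infty,
\]
and hence $\mathbb{W}_{x_0}(\sigma_{S_p}<\infty)=1$.

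The uniform lower bound $\delta$ is the delicate point. I would obtain it via Girsanov on the time interval $[0,1]$ with the drift truncated outside the ball of radius $R+1$, where the Novikov condition is trivial because the truncated drift is bounded.
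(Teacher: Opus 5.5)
Your proof is correct, and it departs from the paper's at exactly the point you flag as the main obstacle.

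\textbf{What the paper does.} It uses the same $W$ (with a factor $\tfrac12$) and the same drift inequality outside $\mathscr{B}_R$. It then concludes from a single application of Dynkin's formula with $\tau=\sigma_{S_p}\wedge T\wedge\tau_R$, where $\tau_R$ is the \emph{exit} time from $\mathscr{B}_R$. For $x_0$ outside $\mathscr{B}_R$ this gives $\tau=0$. For $x_0$ inside, the drift bound never applies on $[0,\tau)$. So the time spent in $\mathscr{B}_R\setminus S_p$, where $\mathscr{L}W>0$ near any other zero of $q$, is never controlled.

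\textbf{What your route adds.} You split the argument into two stages, chained by the strong Markov property. First, the Lyapunov bound gives an integrable return time $\mathbb{E}_y[\sigma_R]\le W(y)$ to the large ball. Second, Girsanov gives a uniform probability $\delta>0$ of reaching $S_p$ within unit time from that ball. This second stage is the irreducibility (petite-set) ingredient that the cited Foster--Lyapunov criterion needs in addition to the drift inequality. The paper leaves it implicit; you make it explicit, and you obtain the quantitative bound $\mathbb{E}[\sigma_{S_p}]\le W(x_0)+\delta^{-1}(1+c_R)$.

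Your drift estimate is also cleaner than the paper's. Taking $R_0=\max\{r,1/\mu\}$ legitimately puts you in the branch $\|q(x)\|_2\ge 1$ of Assumption~\ref{ass:A}, which the paper uses without comment. Keeping the bound quadratic in $\|q(x)\|_2$ also makes Lemma~\ref{lemma:growth} unnecessary.

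\textbf{Points to make explicit when you write it out.}
\begin{enumerate}
\item State that $\|\bar{x}\|_2<r\le R$. This holds because $q(\bar{x})=0$, while Assumption~\ref{ass:B} forces $\|q(x)\|_2\ge\mu r>0$ for $\|x\|_2\ge r$. It guarantees that $S_p$ meets the region where the truncated drift equals $q$.
\item Let each trial stop at the first of: time $1$, $\sigma_{S_p}$, or exit from $\{\|x\|_2\le R+1\}$. Then the truncated and true processes agree on the success event. Also, a failed trial ends in the closed ball of radius $R+1$, where $W\le c_R$ by convexity of $W$.
\item Pass from a uniform Brownian tube probability (along the segment from $x$ to $\bar{x}$) to the drifted process. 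Write $\mathbb{P}$ for the law of the truncated process and $\mathbb{Q}$ for that of $x+\epsilon B$ on $[0,1]$. Then use $\mathbb{Q}(A)\le\mathbb{P}(A)^{1/2}\,\|d\mathbb{Q}/d\mathbb{P}\|_{L^2(\mathbb{P})}$, where the last factor is at most $\exp(M_R^2/(2\epsilon^2))$.
\end{enumerate}
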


\begin{proof}
Consider the Lyapunov function $W(x) = \frac{1}{2}\|x - \bar{x}\|_2^2$. The generator acting on $W$ is:
\begin{equation}\label{eq:lw_expansion}
	\mathscr{L}W(x) = -(x - \bar{x}) \cdot q(x) + \frac{1}{2}\epsilon^2 n = -x \cdot q(x) + \bar{x} \cdot q(x) + \frac{1}{2}\epsilon^2 n.
\end{equation}
We analyze each term. By Assumption \ref{ass:A}, for $\|x\|_2 \ge r$ and $\|q(x)\|_2 \ge 1$:
\begin{equation}\label{eq:term1}
	-x \cdot q(x) \le -\left(1 + \frac{\theta_0^2}{2}\right) \|q(x)\|_2^2.
\end{equation}
By Assumption \ref{ass:B}, $\|q(x)\|_2 \ge \mu \|x\|_2$ for $\|x\|_2 \ge r$. Combining:
\begin{equation}\label{eq:gamma_bound}
	-x \cdot q(x) \le -\left(1 + \frac{\theta_0^2}{2}\right) \mu^2 \|x\|_2^2 =: -\gamma \|x\|_2^2.
\end{equation}
For the second term, by the Cauchy-Schwarz inequality and Lemma \ref{lemma:growth}:
\begin{equation}\label{eq:term2}
	|\bar{x} \cdot q(x)| \le \|\bar{x}\|_2 \|q(x)\|_2 \le \|\bar{x}\|_2 L(1 + \|x\|_2).
\end{equation}
Therefore, for $\|x\|_2$ sufficiently large (say $\|x\|_2 \ge R$ where $R > \max\{r, p\}$):
\begin{equation}\label{eq:lw_combined}
	\mathscr{L}W(x) \le -\gamma \|x\|_2^2 + L\|\bar{x}\|_2(1 + \|x\|_2) + \frac{1}{2}\epsilon^2 n.
\end{equation}
Since $\gamma > 0$, the quadratic term dominates for large $\|x\|_2$. Specifically, there exists $\lambda > 0$ such that:
\begin{equation}\label{eq:drift_condition}
	\mathscr{L}W(x) \le -\lambda \quad \text{for all } x \notin \mathscr{B}_R.
\end{equation}
This is the Foster-Lyapunov drift condition for positive recurrence \cite{Meyn2009}. Applying Dynkin's formula with the stopping time $\tau := \sigma_{S_p} \wedge T \wedge \tau_R$, where $\tau_R := \inf\{t \ge 0 : \|X_t\|_2 \ge R\}$:
\begin{equation}\label{eq:dynkin}
	\mathbb{E}[W(X_\tau)] = W(x_0) + \mathbb{E}\left[\int_0^\tau \mathscr{L}W(X_s)\, ds\right].
\end{equation}
If $x_0 \notin S_p$, then while the process remains outside $S_p$, we have $\mathscr{L}W \le -\lambda$ once $\|X_s\|_2 \ge R$. This yields:
\begin{equation}\label{eq:expectation_tau}
	\mathbb{E}[W(X_\tau)] \le W(x_0) - \lambda \mathbb{E}[\tau \cdot \mathbf{1}_{\{\|X_s\| \ge R\}}].
\end{equation}
Since $W \ge 0$, taking the limit as $T \to \infty$ and applying the dominated convergence theorem gives $\mathbb{E}[\sigma_{S_p}] < \infty$. The hitting probability $\mathbb{W}_{x_0}(\sigma_{S_p} < \infty) = 1$ follows from the finiteness of the expectation. The convergence in distribution to a random variable $X$ with $\mathbb{E}[q(X)] = 0$ follows from standard ergodic theory for positive recurrent diffusions \cite[Chapter 4]{Khasminskii2011}.
\end{proof}

Positive recurrence has a natural ergodic consequence.  Because the process returns to compact sets in finite expected time and the non-degenerate diffusion coefficient ensures irreducibility, classical results guarantee the existence of a stationary regime.  We record this below.

\begin{corollary}\label{cor:ergodic}
	Under Assumptions \ref{ass:A} and \ref{ass:B}, the process $\{X_t\}_{t \ge 0}$ admits a unique invariant probability measure $\pi$ on $\mathbb{R}^n$. For any bounded measurable function $\varphi$:
	\begin{equation}\label{eq:ergodic}
		\frac{1}{T}\int_0^T \varphi(X_s)\, ds \to \int_{\mathbb{R}^n} \varphi(x)\, d\pi(x) \quad \text{a.s.\ as } T \to \infty.
	\end{equation}
\end{corollary}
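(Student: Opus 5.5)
The plan is to go through the standard Khasminskii route: positive recurrence gives existence of an invariant measure, non-degeneracy gives uniqueness, and uniqueness together with positive recurrence gives the ergodic theorem.

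\textbf{Step 1 (existence).} By Theorem~\ref{thm:existence}, $X_t$ is a non-explosive strong Markov process with continuous paths. Since $q$ is locally Lipschitz, hence continuous, it is also a Feller diffusion. Theorem~\ref{thm:recurrence} gives a ball $S_p$ with $\mathbb{E}_{x}[\sigma_{S_p}]<\infty$ for every starting point. I also need this expectation to be locally bounded in $x$. Tracing the Dynkin argument behind~(\ref{eq:drift_condition}) gives $\mathbb{E}_x[\sigma_{\mathscr{B}_R}]\le W(x)/\lambda$ outside $\mathscr{B}_R$, which is locally bounded. Inside $\mathscr{B}_R$, uniform ellipticity gives a uniform bound on the expected exit time from a slightly larger ball. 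These are exactly the hypotheses of Khasminskii's criterion \cite[Chapter~4, Theorem~4.1]{Khasminskii2011}: a bounded domain $U$ with smooth boundary, a non-degenerate diffusion matrix on $U$, and $\sup_{x\in K}\mathbb{E}_x[\sigma_U]<\infty$ for compact $K$. The criterion yields an invariant probability measure. Alternatively, I would build $\pi$ as the occupation measure of one excursion between two concentric spheres, normalized by the expected cycle length, which is finite.

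\textbf{Step 2 (uniqueness).} The diffusion coefficient is $\epsilon I_n$ with $\epsilon>0$, so the generator is uniformly elliptic with locally Lipschitz drift. By a support theorem or Girsanov comparison with $x_0+\epsilon B_t$ on finite horizons, the transition kernels $P_t(x,\cdot)$ have positive densities for every $t>0$. The process is therefore strong Feller and Lebesgue-irreducible. Any two invariant measures are then mutually absolutely continuous with a common support, namely all of $\mathbb{R}^n$. Distinct ergodic invariant measures would have to be mutually singular, which is a contradiction, so $\pi$ is unique. Doob's theorem (strong Feller plus irreducible) packages this argument.

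\textbf{Step 3 (the ergodic limit~(\ref{eq:ergodic})).} I would use a regeneration argument. Define alternating hitting times of $\partial S_p$ and $\partial S_{p'}$ with $p'>p$. The successive positions on $\partial S_p$ form a Markov chain on a compact set. Its kernel has a positive density by Harnack's inequality, so the chain satisfies a Doeblin condition and is uniformly ergodic. The integrals $\int\varphi(X_s)\,ds$ over the cycles are bounded by $\|\varphi\|_\infty$ times the cycle lengths, which have finite means by Step~1. The strong law for such functionals of a uniformly ergodic chain then gives~(\ref{eq:ergodic}), with limit equal to the cycle-average representation of $\pi$. The contribution of the incomplete final cycle vanishes after dividing by $T$, since the cycle length divided by its index tends to zero almost surely.

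\textbf{Main obstacle.} I expect the hardest step to be the uniform (local) bound on expected return times in Step~1. The proof of Theorem~\ref{thm:recurrence} is stated for a fixed $x_0$ and is somewhat loose: it mixes $\|x\|$ with $\|x-\bar x\|$ and uses $R$ in place of $p$. I would first restate the drift condition cleanly as $\mathscr{L}W\le-\lambda$ outside $\mathscr{B}_R$. From that, Dynkin's formula gives $\mathbb{E}_x[\sigma_{\mathscr{B}_R}]\le W(x)/\lambda$, and I would use the ball $\mathscr{B}_R$, rather than $S_p$, as the recurrent set in Khasminskii's theorem.
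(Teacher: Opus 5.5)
Your proposal is correct, and it is considerably more complete than the paper's proof, which follows a different route.

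\textbf{The paper's proof.} It is two sentences. Existence of $\pi$ is attributed to Krylov--Bogoliubov ``by positive recurrence'', with no tightness estimate for $\frac{1}{T}\int_0^T P_t(x_0,\cdot)\,dt$. Uniqueness is attributed to irreducibility plus the strong Feller property. The almost-sure limit \eqref{eq:ergodic} is never argued.

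\textbf{How your route compares.} Your Step~2 is the paper's uniqueness argument with the mechanism supplied: $P_t(x,\cdot)$ is equivalent to Lebesgue measure, hence to any invariant measure, and then Doob's theorem applies. The genuine differences are Steps~1 and~3.
\begin{itemize}
\item In Step~1 you replace Krylov--Bogoliubov with Khasminskii's criterion. This forces you to derive the locally uniform bound $\mathbb{E}_x[\sigma_{\mathscr{B}_R}]\le W(x)/\lambda$. The paper's Dynkin argument does not give this: it defines $\tau_R$ as an exit time rather than an entrance time, and it only yields pointwise finiteness.
\item In Step~3 you get \eqref{eq:ergodic} from every starting point by regeneration through a Doeblin chain on a sphere. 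This also gives $\pi$ explicitly as a normalized cycle-occupation measure.
\end{itemize}

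\textbf{A shortcut, if you do not need the cycle representation.} The paper's soft route, combined with your Step~2, already finishes the job.
\begin{itemize}
\item Birkhoff's theorem for the stationary process gives the limit for $\pi$-a.e.\ $x_0$; the process is ergodic because $\pi$ is unique.
\item The convergence event is shift-invariant and $P_t(x_0,\cdot)\ll\pi$. The Markov property then upgrades the limit to every $x_0$.
\end{itemize}
So Step~3 is optional unless you want the explicit form of $\pi$.

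\textbf{Two small consistency fixes.}
\begin{itemize}
\item In Step~3, run the cycles between $\partial\mathscr{B}_R$ and a larger concentric sphere, not between $\partial S_p$ and $\partial S_{p'}$. Your Step~1 only bounds return times to $\mathscr{B}_R$. Alternatively, invoke the standard lemma that positive recurrence of a nondegenerate diffusion to one ball transfers to every ball.
\item The corollary does not assume a Pareto-optimal point exists, so avoid $\bar x$ altogether. Use $V(x)=\|x\|_2^2$ instead. Assumptions~\ref{ass:A} and~\ref{ass:B} give $\mathscr{L}V(x)\le -2\left(1+\frac{\theta_0^2}{2}\right)\mu^2\|x\|_2^2+\epsilon^2 n$ whenever $\|x\|_2\ge\max\{r,1/\mu\}$. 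This bound also yields uniformly bounded time-averaged second moments, which is exactly the tightness the paper's Krylov--Bogoliubov step leaves unproved.
\end{itemize}
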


\begin{proof}
Positive recurrence implies the existence of an invariant measure by the Krylov-Bogoliubov theorem. Uniqueness follows from irreducibility (the process can reach any open set due to non-degenerate noise) and strong Feller properties of the diffusion semigroup \cite[Theorem 4.2.1]{Mao2007}.
\end{proof}

Taken together, Lemma~\ref{lemma:growth}, Theorems~\ref{thm:existence} and~\ref{thm:recurrence}, and Corollary~\ref{cor:ergodic} establish a complete stability picture for the drift--diffusion dynamics: trajectories exist globally, revisit arbitrarily small neighborhoods of any Pareto-optimal point in finite expected time, and admit a unique invariant measure whose time averages converge almost surely.  We deliberately stated the hypotheses so that they can be checked for concrete families of objective functions---an aspect that is essential if the theoretical guarantees are to have practical value.

We now turn to the computational realization.  Section~4 discretizes~(\ref{eq:sde}) via the Euler--Maruyama scheme and embeds the resulting iteration within \emph{pymoo}, yielding an implementation reproducible and directly comparable with standard evolutionary baselines under matched evaluation budgets.

\section{Numerical Implementation}

The stability results of the preceding section hold in continuous time, yet any computational realization must operate on a discrete grid.  Translating the drift--diffusion dynamics into an iterative scheme that preserves the qualitative behavior---in particular, the interplay between directed drift and stochastic dispersion---is the central concern of this section.  We describe the discretization, present the resulting algorithm, and detail its integration into the open-source \emph{pymoo} framework.

\subsection{Euler--Maruyama discretization}

We discretize the SDE~(\ref{eq:sde}) using the Euler--Maruyama scheme, the simplest strong approximation method for It{\^o} stochastic differential equations~\cite{Kloeden1992}.  For a step size $\sigma > 0$ and noise intensity $\epsilon > 0$, the update reads
\begin{equation}\label{eq:euler_maruyama}
	x_{j+1} = x_j - \sigma\, q(x_j) + \epsilon \sqrt{\sigma}\, \eta_j, \quad \eta_j \sim \mathcal{N}(0, I_n).
\end{equation}
The factor $\sqrt{\sigma}$ in the diffusion term ensures consistency with Brownian motion scaling.  The descent direction $q(x_j)$ requires solving the quadratic program~(\ref{eq:qop}) at each step.  When analytical gradients are unavailable, we approximate the Jacobian $J(x_j)$ via centered finite differences.  While this incurs an additional cost of $2n$ function evaluations per particle per step, it allows the method to be applied to black-box problems under the assumption of underlying differentiability.

\subsection{Integration with pymoo}

Starting from $N$ independent trajectories sampled uniformly in the decision space, we advance each particle by one Euler--Maruyama step per generation; an external archive collects non-dominated solutions encountered along the way, building a running approximation of the Pareto front.  Algorithm~\ref{alg:ssw} states the procedure explicitly.

\begin{algorithm}[t]
	\caption{SSW -- Stochastic Steepest Weights}\label{alg:ssw}
	\begin{algorithmic}[1]
		\Require Objective $f: \mathbb{R}^n \to \mathbb{R}^m$; step size $\sigma$; noise $\epsilon$; population $N$; generations $G$
		\Ensure Archive $\mathcal{P}^*$
		\State Sample $x_1^{(0)}, \dots, x_N^{(0)}$ uniformly in $[x_l, x_u]$
		\State Evaluate $f(x_i^{(0)})$ for all $i$ and initialize $\mathcal{P}^*$
		\For{$j = 0, \dots, G-1$}
		\For{$i = 1, \dots, N$}
		\State Compute $J(x_i^{(j)})$ \Comment{Analytic or finite diff.}
		\State Solve QP for $\hat{\alpha} \in \Delta^m$ and set $q = J^\top \hat{\alpha}$
		\State Draw $\eta \sim \mathcal{N}(0, I_n)$
		\State Update $x_i^{(j+1)} \leftarrow x_i^{(j)} - \sigma q + \epsilon\sqrt{\sigma} \eta$
		\State Project onto $[x_l, x_u]$
		\EndFor
		\State Evaluate population and update $\mathcal{P}^*$
		\EndFor
		\State \Return $\mathcal{P}^*$
	\end{algorithmic}
\end{algorithm}

\begin{remark}
	Although the theoretical analysis in Section 3 addresses the unconstrained dynamics on $\mathbb{R}^n$, Algorithm \ref{alg:ssw} enforces a projection onto a large hyperrectangle $[x_l, x_u]$.  This constraint is added for numerical stability and to bound the search space in practical implementations, consistent with standard evolutionary computation practices.  For sufficiently large bounds, the recurrent behavior guaranteed by Theorem \ref{thm:recurrence} ensures that trajectories remain concentrated near the Pareto set well within the interior of the domain.
\end{remark}

We implement SSW as a subclass of \emph{pymoo}'s \emph{Algorithm} class, which gives us immediate access to the library's performance metrics and visualization tools---a non-trivial practical benefit.  We also vectorize the finite-difference computations, evaluating the entire population in a single batch call to the problem's evaluation function; this reduces Python overhead considerably, in particular for large population sizes.  The core update logic is shown in the following listing.

\begin{lstlisting}[style=bw_style, caption={Vectorized Euler--Maruyama step in the SSW class.},label=lst:ssw,float=t]
def _infill(self):
  X = self.pop.get("X")
  # J_batch has shape (N, m, n)
  X_new = np.empty_like(X)
  for i in range(len(X)):
    q = _compute_q(J_batch[i]) # Solve QP
    eta = self.random_state.standard_normal(n)
    X_new[i] = X[i] - sigma * q + \
               eps * np.sqrt(sigma) * eta
  xl, xu = self.problem.xl, self.problem.xu
  return Population.new("X", np.clip(X_new, xl, xu))
\end{lstlisting}

\subsection{Comparative Analysis with NSGA-II and NSGA-III}

To situate SSW within the broader algorithmic landscape, we compare it against two well-established baselines across both multi- and many-objective regimes: NSGA-II~\cite{Deb2002} and NSGA-III~\cite{Deb2014}. The latter is designed for $m > 3$ objectives through reference-point-based environmental selection. We consider objective counts $m \in \{3, 5, 10, 15\}$ on the scalable DTLZ2 benchmark. For comparability, all algorithms are run under the same budget of 30,000 objective evaluations. For SSW, this budget includes the finite-difference cost ($2n$ evaluations per particle per generation), which substantially reduces the number of admissible generations (about 8--25) relative to derivative-free baselines (about 400--900).

We quantify performance using the Averaged Hausdorff Distance ($\Delta_p$), with $p=1$, proposed in~\cite{Schutze2012}---a choice motivated by its robustness to outliers, a property the standard Hausdorff distance does not share. All experiments were conducted on a workstation equipped with an Intel Core i9 processor and 32GB of RAM. The executions utilized a GPU with 8GB VRAM, leveraging the parallel implementation capabilities integrated into the \emph{PymooLab} framework to execute 30 independent runs in parallel.

\subsection{Convergence on Convex Benchmark (DTLZ2)}

We first evaluate the algorithms on DTLZ2, which has a smooth concave Pareto front. Table \ref{tab:dtlz2} reports median and interquartile range (IQR) over 30 runs. The observed ranking depends strongly on the number of objectives. In lower dimensions ($m=3$), NSGA-II and NSGA-III achieve smaller $\Delta_p$ values ($<0.1$), while SSW attains $\Delta_p \approx 0.18$, a gap that is consistent with the much smaller number of generations permitted by the gradient-estimation budget.

In the many-objective settings ($m=10, 15$), NSGA-III yields the smallest $\Delta_p$ values (around $0.5$ in our runs). This behavior is compatible with its selection mechanism: unlike NSGA-II, whose crowding-distance criterion weakens under dominance resistance, NSGA-III uses reference directions and perpendicular-distance association to preserve spread and directional progress in high-dimensional objective spaces.

SSW remains competitive relative to NSGA-II in the highest-dimensional cases (for example, about $1.3$ versus $2.4$ at $m=15$), despite operating with far fewer generations. We attribute this, at least in part, to the gradient-induced drift: even as pairwise dominance becomes increasingly unreliable in high dimensions, the drift continues to supply directional bias toward the Pareto set. At the same time, SSW does not include an explicit reference-direction mechanism, which likely contributes to its gap relative to NSGA-III in final front quality.

\textbf{Limitations.} While effective on scalable convex manifolds like DTLZ2, applying the method to highly multimodal or disconnected landscapes reveals that the current local gradient drift is insufficient to escape deep local basins or bridge disjoint Pareto components without a global restart mechanism. We suspect that hybrid designs incorporating global restart mechanisms offer the most direct path toward resolving these topological difficulties.

\begin{table*}[!htp]
	\centering
	\caption{Convergence consistency on the convex DTLZ2 benchmark ($30$ independent runs, Median [IQR]).}
	\label{tab:dtlz2}
	\resizebox{\textwidth}{!}{%
		\begin{tabular}{llccc}
			\toprule
			Problem & $m$ & NSGA-II         & NSGA-III        & SSW             \\
			\midrule
			DTLZ2   & 3   & 0.0731 [0.0053] & 0.0518 [0.0001] & 0.1785 [0.0354] \\
			DTLZ2   & 5   & 0.3783 [0.0454] & 0.0008 [0.0004] & 1.0816 [0.1960] \\
			DTLZ2   & 10  & 1.9642 [0.1004] & 0.5261 [0.0002] & 1.4054 [0.1340] \\
			DTLZ2   & 15  & 2.3967 [0.0364] & 0.5978 [0.0012] & 1.3362 [0.0571] \\
			\bottomrule
		\end{tabular}}
\end{table*}

\begin{remark}
	The noise intensity~$\epsilon$ governs the trade-off between exploitation and exploration. We used $\epsilon=0.15$ in these experiments. How to adapt~$\epsilon$ during the run---in a principled way that preserves the recurrence guarantees of Section~3---is, to our knowledge, unresolved.
\end{remark}

\section{Conclusions and Future Work}

What began as a gap-filling exercise---Sch{\"a}ffler's drift--diffusion model had circulated in the literature without a fully self-contained stability proof---grew into something more substantive.  Assumption~\ref{ass:A} (dissipativity) alone is sufficient for global existence and non-explosion of strong solutions; the additional coercivity requirement of Assumption~\ref{ass:B} closes the remaining gap by supplying the Foster--Lyapunov drift condition that establishes positive recurrence.  Neither hypothesis is exotic, and---crucially---both are stated in a form that permits direct verification for concrete objective families, including strongly convex settings and problems with bounded Pareto sets.

The implementation side is, admittedly, less theoretically charged.  The continuous-time process is discretized via Euler--Maruyama and embedded as a \emph{pymoo} \emph{Algorithm} subclass, a design decision that makes the method immediately interoperable with the library's benchmarking and visualization tools.  Reproducibility is further supported by \emph{PymooLab}~\cite{Santosetal2026}, an interactive front-end for parameter inspection and Pareto-front visualization; source code will be made publicly available at \url{https://github.com/proftheago/pymoolab} upon acceptance of this manuscript.

Several problems remain genuinely open.  The current scope is restricted to differentiable, unconstrained objectives, and extending the framework to inequality constraints or nonsmooth functions is not straightforward---penalty methods and barrier strategies each interact with the stochastic drift in ways that would need to be studied afresh.  More pressing, perhaps, is the noise intensity~$\epsilon$: a fixed value was used throughout, but the natural analogue of a cooling schedule (as in simulated annealing) would make the algorithm self-adjusting, and whether such a schedule preserves the recurrence properties proved in Section~3 is, to the best of our knowledge, unresolved.  Implicit or operator-splitting discretization schemes could also reduce step-size sensitivity for objectives with large curvature---the standard Euler--Maruyama update is sufficient here but not, in general, the most robust choice.

One direction deserves a separate mention.  The gradient-guided drift is an asset that the present formulation barely exploits for diversity: particles are free to cluster on a single region of the Pareto front, a failure mode that becomes pronounced precisely when pairwise dominance pressure weakens in high-dimensional objective spaces.  Niche clearing, eliting mechanisms, and reference-point association strategies analogous to those underlying NSGA-III~\cite{Deb2014} could, in principle, be coupled with the stochastic descent to counteract this tendency.  We suspect the theoretical guarantees of Section~3 would survive such hybridization---though proving this remains a task for future work.

\section*{Acknowledgment}
The authors would like to thank METISBR: A Brazilian research group dedicated to Multi-Objective and Many-Objective Optimization (MaOPs) (\url{https://github.com/METISBR}), for the valuable discussions and the entire team's support during the development of this paper.

\bibliographystyle{ieeetr}
\bibliography{ref}

\end{document}